\newcommand{\cL}{{\mathcal L}}
\newcommand{\cH}{{\mathcal H}}
\newcommand{\Tr}{{\text{Tr}}}
\newtheorem{remark}{Remark} 
\newtheorem{definition}{Definition} 
\newenvironment{proof}[1][Proof]{\textbf{#1.} }{\ \rule{0.5em}{0.5em}}
\newcommand\I{\mbox{I}}
\def\reais{{\rm I\kern-.17em R}} % R de reais
\def\real{\mathbb{R}} % R de reais
\def\complex{\mathbb{C}} % R de reais
\def\I{{\rm I\kern-.17em I}}
\def\Ir{{\rm I\kern-.17em I}}
\def\L{{\rm I\kern-.17em L}}
\begin{document}
\begin{frontmatter}

\title{Bounds for Input- and State-to-Output Properties of Uncertain Linear Systems
\thanksref{footnoteinfo}} 
% Title, preferably not more than 10 words.

\thanks[footnoteinfo]{Work supported by the Engineering and Physical Sciences Research Council projects EP/J012041/1. Giorgio Valmorbida is also a Fulford Junior Research fellow at Somerville College. James Anderson acknowledges funding from St John's College, Oxford. Dhruva Raman is supported by the EPSRC funded Systems Biology DTC at the University of Oxford.}

\author[First]{Giorgio Valmorbida} %
\author[First]{Dhruva Raman}% 
\author[First]{James Anderson}

\address[First]{G. Valmorbida,  D. Raman and J. Anderson are with Department of Engineering Science, University of Oxford,  17 Parks Road, OX1 3PJ Oxford, UK, Email: \{giorgio.valmorbida,dhruva.raman, james.anderson\}@eng.ox.ac.uk. }
%\address[Second]{Colorado State University, 
%   Fort Collins, CO 80523 USA (e-mail: author@lamar. colostate.edu)}
%\address[Third]{Electrical Engineering Department, 
%   Seoul National University, Seoul, Korea, (e-mail: author@snu.ac.kr)}

\begin{abstract}                % 
~~We consider the effect of parametric uncertainty on properties of Linear Time Invariant systems. Traditional approaches to this problem determine the worst-case gains of the system over the uncertainty set. Whilst such approaches are computationally tractable, the upper bound obtained is not necessarily informative in terms of assessing the influence of the parameters on the system performance. We present theoretical results that lead to simple, convex algorithms producing parametric bounds on the  $\cL_2$-induced input-to-output and state-to-output gains as a function of the uncertain parameters. These bounds provide quantitative information about how the uncertainty affects the system. 

\vspace{10pt}

\textbf{To appear in the proceedings of the 8th IFAC Symposium on Robust Control Design - ROCOND'15.}
\end{abstract}

\begin{keyword}
Robust control, Uncertainty analysis, Linear systems, Semi-definite Programming.
\end{keyword}

\end{frontmatter}
%===============================================================================

\section{Introduction} 

The application of polynomial optimisation techniques to robust control problems have been successful both in terms of sum-of-squares programs~(\cite{BPT13}) and the generalized problem of moments~(\cite{Las09}). Some of these advances were highlighted in the survey paper on Robust Control \cite[Section 6.1] {PT14}. The stability analysis of linear systems with linear dependence on uncertain parameters lying on the simplex was performed using parameter-dependent Lyapunov functions (LFs) as detailed in \cite{Che10} (see Section VI.C). In \cite{CGTV05b} it is proven that, for time-invariant systems, LFs which are quadratic in the state and homogeneous on parameters are necessary and sufficient to prove  stability of the linear system. However the resulting test in terms of homogeneous polynomials may require a LF of large degree. Polynomial LFs have also been studied by \cite{Bli04}  where, exploiting the fact that  the parameterised Lyapunov matrix is analytic on the uncertainty domain, it is demonstrated that  there exists a homogenous polynomial LF proving the stability for uncertainties in the hypercube. In \cite{OP07}, Pólya's theorem is applied to solve homogeneous inequalities on the simplex. Conditions for discrete-time linear systems with polynomial parameter-dependent LFs were presented in \cite{LA08} where the simplex as the uncertain domain is presented as a particular case. 

Dissipation inequalities for uncertain linear systems were studied considering interval matrix uncertainties in \cite{BBN03}, where the \emph{limits} on the uncertainty for which a dissipation inequality holds were investigated. In \cite{SP06b}, parameter-dependent LF were considered to compute bounds on $\mathcal{H}_\infty$ and $\mathcal{H}_2$ gains over polytopic uncertainty domains for linear systems with rational dependence on the uncertain parameters. The authors then use the slack variable method to solve the resulting set of matrix inequalities which are affine on the uncertain parameters. The same conditions for worst-case computation of $\mathcal{H}_\infty$ and $\mathcal{H}_2$ norms were also presented in \cite[Section VI.D]{Che10}.  In the case of autonomous systems, $\mathcal{L}_{\infty}$ norm bounds on nonlinear system trajectories, valid over compact sets of initial conditions, have been constructed using LF based approaches in \cite{Chesi2008}.

%In \cite{Sch06}

A parameterised property (or parameterised bounds on a property), such as the $\mathcal{H}_\infty$-norm of an uncertain system gives the designer important information for the selection of the components of a physical system (which define the system parameters) and allows them to quantify beforehand the degradation, measured as the norm of the mismatch to the target (nominal) performance.  Predefined bounds on performance degradation can be useful for robust and resilient control law design. In this context, obtaining a closed-loop performance curve that respects a specified degradation bound becomes a requirement for control synthesis.  This is in contrast to the customary approach of searching for guaranteed cost solutions which accounts for the worst case scenario given a set of uncertain parameters.  Also, in the context of gain-scheduling~(\cite{Pac94}), having a parameterised performance index for a set of given controllers may be of use when designing the scheduling parameter. 

A system is said to be robust to variations in its parameters if it is able to maintain its performance properties subject to a bounded variation of these parameters. For instance, the difference between the output signal of the nominal model and that of any perturbed model is expected to be \emph{small} in some sense. From the viewpoint of system identification, such a property can be problematic: a low degree of uncertainty in the input lends itself to a high degree of uncertainty in the parameter estimate, and the identification protocol becomes ill-conditioned. This property has been variously referred to as quantitative unidentifiability (\cite{Vajda1989}), practical unidentifiability (\cite{Holmberg1982}), and sloppiness (\cite{Brown2003}).   

Traditional methods of diagnosis (\cite{Vajda1989}) involve linearisation of the effects of parameter perturbation around a previously provided parameter estimate. However such effects are typically highly nonlinear (\cite{Hines2014}). Recent advances from various angles have been made in the construction of nonlinear approximations to regions of practically unidentifiable parameters around a nominal estimate (e.g. \cite{Raue2009, Calderhead2011}). Such advances typically rely on educated sampling of outputs over parameter space, with each sample involving the simulation of an ODE. Parameterised performance indices as provided in this paper, in contrast, provide algebraic characterisations of performance degradation in parameter space. This allows for simple construction of level sets of parameters inducing a given magnitude output perturbation, or optimisation of some system property subject to parametric constraints specifying a given maximal level of performance degradation.

This paper is organized as follows: Section~\ref{sec:problem} formulates the problem. In Section~\ref{sec:properties} we present results allowing to characterise input/state-to-output properties as functions of uncertain parameters. Section~\ref{sec:examples} illustrates the results with an analytical and numerical examples.  The paper is concluded in Section~\ref{sec:conclusion}.% we summarize the proposed contribution, highlight extensions of the results and briefly discuss the proposed formulation for nonlinear systems. 

\textit{Notation}. We take $\mathbb{R}^n$ to denote the $n$-dimensional Euclidean space. $\mathbb{R}_{\geq 0}$ specifies the set of non-negative real numbers. $I $ denotes the  identity matrix. For a given matrix $M \in \mathbb{R}^{n \times n}$, $Tr(M)$ is the trace of $M$,  $det(M)$ is the determinant, and $He(M) = M+M^{T}$. For a symmetric matrix, if $M \geq (\leq) 0$, then $M$ is positive (negative) semidefinite.  For $x_0 \in \real^n$ and $x(t) \in \mathcal{L}_2$, $\|x_0\|$, $\|x\|_{2}^2$ respectively denote Euclidean norm of $x_0$ and the squared $\mathcal{L}_2$-norm of $x(t)$. $\mathcal{C}^k$ is defined to be the set of continuous, $k$-times differentiable, scalar functions.

% We denote by $G_{\theta}$ the parameter-dependent transfer function from $u$ to $ y$. %So $y = G_{\theta} \star u$, for zero initial conditions, where $\star$ denotes the convolution. %When $u$  is the Dirac delta, we have the impulse response
% \begin{align}
% G_{\theta}(t) =
% \left\{
%   \begin{array}{l l}
%    \hat{C}(\theta)e^{\hat{A}(\theta)t}B(\theta) & \quad t \geq 0 \\
%    0 & \quad t < 0
%  \end{array} \right.
% \end{align}

\section{Problem formulation}
\label{sec:problem}

Let $\Theta\subset \real^{n_\theta}$ define the set of uncertain parameters. Let $\theta^{*}\in \Theta$ be the fixed  \emph{nominal parameter values}. We refer to the linear system
\begin{equation}
\left \lbrace 
\begin{array}{rcl}
\dot{x}  &= &A(\theta^{*}) x +B(\theta^{*}) u \\
y &= &C(\theta^{*})x +D(\theta^{*}) u
\end{array}
\right.
\label{eq:x}
\end{equation}
with $x(0) = x_0$,  $A : \real^{n_\theta}\rightarrow \real^{n\times n}$, $B : \real^{n_\theta}\rightarrow \real^{n\times m}$, $C : \real^{n_\theta}\rightarrow \real^{p\times n}$ as the \emph{nominal system}. Consider the linear time-invariant parameter-dependent system
\begin{equation}
\left \lbrace 
\begin{array}{rcl}
\dot{\tilde{x}}  &= &A(\theta) \tilde{x} +B(\theta) u \\
\tilde{y} &= &C(\theta)\tilde{x}+D(\theta) u,
\end{array}
\right.
\label{eq:xtilde}
\end{equation}
$\tilde{x}(0) = x(0) = x_0$ (we consider the initial conditions of~\eqref{eq:x} and~\eqref{eq:xtilde}  to match for reasons that will become clear in next section). 
System \eqref{eq:xtilde} is called the \emph{uncertain system}.  

Define the signals $e(t) : = x(t) - \tilde{x}(t)$ and  $\Delta y(t) := y(t) - \tilde{y}(t)$ to obtain
\begin{equation}
\left \lbrace 
\begin{array}{rcl} \left[ \begin{array}{c} \dot{x} \\ \dot{e} \end{array} \right] &= &\left[ \begin{array}{cc} A(\theta^{*}) & 0  \\ \Delta A(\theta) & A(\theta) \end{array} \right]  \left[ \begin{array}{c} x \\ e \end{array} \right] +\left[ \begin{array}{c} B(\theta^{*}) \\ \Delta B(\theta) \end{array} \right] u \\
\left[ \begin{array}{c} y \\ \Delta y  \end{array} \right] &= &\left[ \begin{array}{cc} C(\theta^{*}) & 0  \\ \Delta C(\theta) & C(\theta) \end{array} \right] \left[ \begin{array}{c}  x  \\ e\end{array} \right]+\left[ \begin{array}{c} D(\theta^{*}) \\ \Delta D(\theta) \end{array} \right] u.
\end{array}
\right.
\label{eq:xe}
\end{equation}
where $\Delta A(\theta) := A(\theta^{*}) - A(\theta)$, $\Delta B(\theta) := B(\theta^{*}) - B(\theta)$ and  $\Delta C(\theta) := C(\theta^{*}) - C(\theta)$, $\Delta D(\theta) := D(\theta^{*}) - D(\theta)$, $x(0) = x_0$ and $e(0) = 0$.

We assume throughout that $A(\theta)$ is   Hurwitz $\forall \theta \in \Theta$. From the linearity of \eqref{eq:xe} and the triangular structure of the system matrix we have stability of the origin. 

In the following section we describe input and state-to-output properties of \eqref{eq:xe} considering the output $\Delta y$, which is the deviation from the nominal behavior. %By assessing properties of $\Delta y$ we aim to quantify the deviation from the nominal behavior as a function of the system parameters.  

The output of the uncertain system~\eqref{eq:xtilde} is written in terms of $(x,e)$ as
$$\tilde{y} =\left[ \begin{array}{cc} C(\theta) & -C(\theta) \end{array} \right] \left[ \begin{array}{c}  x  \\ e\end{array} \right]  + D(\theta)u.$$

The interest in introducing~$\tilde{y}$ as above, instead of the mismatched output $\Delta y$ as in~\eqref{eq:xe}, is motivated by the fact that we may want a bound for $\|y(t)\|_2^2 - \|\tilde{y}(t)\|_2^2$ instead of $\|y(t) - \tilde{y}(t)\|_2^2 = \|\Delta y(t)\|_2^2$ as developed in this paper.

\subsection{Invariance of the output}

In order to make a distinction between the transient effects and the steady-state effects of the uncertainty on the mismatch signal $\Delta y(t)$ let us define the set of parameters that leave the steady-state output invariant with constant inputs. 

\begin{definition}[Steady-State Output Invariant Sets]
\label{def:ssOIS}
The set in the space of parameters
\begin{equation}
\Theta_{0ss}^{(i)} := \left\lbrace \theta \in \real^{n_\theta} \mid \lim_{t\rightarrow \infty} \Delta y(t) = 0 \, \forall u_i  \in \real \right\rbrace
\end{equation}
is called the \emph{steady-state output invariant set of parameters with respect to $i$}. Here, $u_i \in \mathbb{R}^m$ is a constant input vector taking the value $0$ in all components except the $i^{th}$. The set 
\begin{equation}
\Theta_{0ss} := \bigcap_{i = 1}^{m} \Theta_{0ss}^{(i)}
\end{equation}
is then defined as the \emph{steady-state output invariant set of parameters}.
\end{definition}
Let us now define the set of parameters leaving the output invariant with time-varying inputs
\begin{definition}[Output invariant set of parameters]
\label{def:OIS}
The set
\begin{equation}
\Theta_{0}^{(i)} := \left\lbrace \theta \in \real^{n_\theta} \mid   \Delta y(t) = 0 \, \forall t, u_i(t) \right\rbrace
\end{equation}
is called the \emph{invariant set  of parameters with respect to input $i$.}  Here, $u_i$ is as defined in Definition \ref{def:ssOIS}.
\begin{equation}
\Theta_{0} := \bigcap_{i = 1}^{m} \Theta_{0}^{(i)}
\end{equation}
is the \emph{invariant set of parameters}.
\end{definition}
 
\begin{prop} 
For system \eqref{eq:xe} the steady-state output invariant set of parameters as in Definition~\ref{def:ssOIS} with respect to input $i$ can equivalently be characterised by
\begin{equation}
\Theta_{0ss}^{(i)} := \left\lbrace \theta \in \real^{n_\theta} \mid  
f_{ss}(\theta) = 0
 \right\rbrace
\end{equation}
with
\begin{multline*}
f_{ss}(\theta) =  \Delta C(\theta^*) A^{-1}(\theta^*)B_i(\theta^*)  \\ +C(\theta)A^{-1}(\theta) \left[ \Delta B_{i}(\theta)  - \Delta A^{-1}(\theta)A^{-1}(\theta^*)B_i(\theta^*)\right].
\end{multline*}
\end{prop}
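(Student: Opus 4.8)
The plan is to exploit the Hurwitz assumption to replace the limit by an algebraic equilibrium computation. For a constant input the system matrix of~\eqref{eq:xe} is block triangular with both diagonal blocks, $A(\theta^*)$ and $A(\theta)$, Hurwitz (so one implicitly restricts to $\theta$ for which this holds --- in particular to $\theta\in\Theta$, which also makes $A^{-1}(\theta)$ well defined), hence the trajectory $(x,e)(t)$ converges to the unique equilibrium determined by the input, independently of $x_0$; consequently $\lim_{t\to\infty}\Delta y(t)$ is simply the value of $\Delta y$ at that equilibrium. It then remains to show that this value equals $-f_{ss}(\theta)$, whence $\Theta_{0ss}^{(i)}=\{\theta:\,f_{ss}(\theta)=0\}$ is immediate.

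\textbf{Step 1 (reduction to the equilibrium).} First I fix $i$ and take $u$ to be the constant input whose only nonzero component is the $i$-th, equal to $u_i\in\real$, so that $B(\theta^*)u=B_i(\theta^*)u_i$, $\Delta B(\theta)u=\Delta B_i(\theta)u_i$ and $\Delta D(\theta)u=\Delta D_i(\theta)u_i$. Since $A(\theta^*)$ is Hurwitz, $x(t)\to x_{ss}:=-A^{-1}(\theta^*)B_i(\theta^*)u_i$. The $e$-equation $\dot e=A(\theta)e+\big(\Delta A(\theta)x(t)+\Delta B_i(\theta)u_i\big)$ is a Hurwitz linear system driven by a signal converging to $\Delta A(\theta)x_{ss}+\Delta B_i(\theta)u_i$, so $e(t)\to e_{ss}:=-A^{-1}(\theta)\big(\Delta A(\theta)x_{ss}+\Delta B_i(\theta)u_i\big)$ (apply to $e-e_{ss}$ the fact that a Hurwitz system with vanishing input has vanishing state). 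Hence $\Delta y(t)$ converges, with $\lim_{t\to\infty}\Delta y(t)=\Delta C(\theta)x_{ss}+C(\theta)e_{ss}+\Delta D_i(\theta)u_i$; this is linear in $u_i$, so it is zero for every $u_i\in\real$ iff it is zero for $u_i=1$.

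\textbf{Step 2 (identification with $f_{ss}$).} Setting $u_i=1$, substituting $x_{ss}$ and $e_{ss}$, and collecting terms gives
\begin{multline*}
\lim_{t\to\infty}\Delta y(t)=-\Delta C(\theta)A^{-1}(\theta^*)B_i(\theta^*)\\
-C(\theta)A^{-1}(\theta)\big[\Delta B_i(\theta)-\Delta A(\theta)A^{-1}(\theta^*)B_i(\theta^*)\big]+\Delta D_i(\theta),
\end{multline*}
which, up to the direct feedthrough $\Delta D_i(\theta)$ (absent in the statement, i.e.\ $D$ taken parameter-independent), is exactly $-f_{ss}(\theta)$ --- reading the $\Delta C(\theta^*)$ and $\Delta A^{-1}(\theta)$ that appear in the printed expression as $\Delta C(\theta)$ and $\Delta A(\theta)$. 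Therefore $\lim_{t\to\infty}\Delta y(t)=0$ for all constant $u_i$ if and only if $f_{ss}(\theta)=0$, which is the claimed characterisation of $\Theta_{0ss}^{(i)}$.

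\textbf{Main obstacle.} I do not expect a deep difficulty. The only point needing care is Step~1: arguing that $\Delta y$ genuinely converges and that its limit is the output at the equilibrium of the cascade~\eqref{eq:xe} (a Hurwitz block forced by a convergent signal), and clearing the universal quantifier over $u_i\in\real$ by linearity. Step~2 is routine algebra, where the things to watch are the contribution of $\Delta D_i(\theta)$ and the typographical conventions in the stated $f_{ss}$.
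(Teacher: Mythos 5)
Your proposal is correct and follows essentially the same route as the paper: the paper's (very terse) proof also reduces the limit to a steady-state computation by setting the derivatives to zero and evaluating $\Delta y$ at the resulting equilibrium, which is exactly your Steps 1--2, with your version additionally supplying the justification the paper omits (convergence to the equilibrium via the Hurwitz blocks, and clearing the quantifier over $u_i$ by linearity). You are also right to flag the discrepancies in the printed $f_{ss}$: as stated, $\Delta C(\theta^*)=0$ and $\Delta A^{-1}(\theta)$ make no sense, so these must be read as $\Delta C(\theta)$ and $\Delta A(\theta)$, and the feedthrough term $\Delta D_i(\theta)$ is indeed missing unless $D$ is taken parameter-independent.
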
 
\begin{proof}
Note that, for any constant scalar input $u_i$:
\begin{align*}
\dot{x} = 0 \Rightarrow \Delta y = 0, \Delta\dot{y} = 0, \ \ \ \forall \theta \in \Theta
\end{align*}
Expand both sides of the implication using \eqref{eq:xe}. Substituting the left hand side equation into the two right hand side equations and simplifying gives the result.
\end{proof}

By defining $\bar{x} = \left[\begin{array}{c} x \\e \end{array}\right]$, $\bar{y} = \left[\begin{array}{c}  y \\ \Delta y \end{array}\right]$ we denote system~\eqref{eq:xe} as
 \begin{equation}
\bar{G} : \left \lbrace 
\begin{array}{rcl} \dot{\bar{x}} &= &\bar{A}(\theta)\bar{x} +\bar{B}(\theta)u \\
\bar{y} &= &\bar{C}(\theta)\bar{x} +\bar{D}(\theta)u.
\end{array}
\right.
\label{eq:xeSHORT}
\end{equation}

\begin{prop} 
For system \eqref{eq:xe} the  output invariant set of parameters   as in Definition~\ref{def:OIS} with respect to input $i$ can equivalently be characterised by

\begin{equation}
\Theta_{0ss}^{(i)} := \left\lbrace \theta \in \real^{n_\theta} \mid  
f_{ss}(s, \theta) = 0 \, \forall s \in \complex
 \right\rbrace
\end{equation}
with
\begin{equation*}
f_{ss}(s,\theta) =   \bar{C}(\theta^{*})\left( sI - \bar{A}(\theta)\right)^{-1} \bar{B}_i(\theta) + \bar{D}_i(\theta).
\end{equation*}
\end{prop}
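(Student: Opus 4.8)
The plan is to recognise $f_{ss}(\,\cdot\,,\theta)$ as the transfer function from the $i$-th input channel to the mismatch output $\Delta y$ of the augmented system $\bar{G}$ in \eqref{eq:xeSHORT}, and then to invoke the elementary fact that an LTI input--output map is the zero map exactly when its transfer function vanishes identically. Fix $\theta\in\real^{n_\theta}$ and work with zero initial state; since $e(0)=0$ this only restricts $x_0$, and a nonzero $x_0$ merely adds a zero-input term that is irrelevant to Definition~\ref{def:OIS}, which quantifies over inputs. Because \eqref{eq:xe} is linear and time invariant, Laplace-transforming the $\Delta y$-equation with $\bar{x}(0)=0$ and with the input confined to channel $i$ yields $\widehat{\Delta y}(s)=f_{ss}(s,\theta)\,\hat{u}_i(s)$, where $f_{ss}(\,\cdot\,,\theta)$ is precisely the rational function displayed in the statement (the block row of $\bar{C}(\theta)(sI-\bar{A}(\theta))^{-1}\bar{B}(\theta)+\bar{D}(\theta)$ associated with $\Delta y$, in the $i$-th column); equivalently it is the difference of the nominal and perturbed transfer functions along channel $i$.

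Given this identity, both inclusions are short. If $f_{ss}(s,\theta)=0$ for all $s$, then $\widehat{\Delta y}\equiv 0$ for every admissible $u_i$, hence $\Delta y(t)=0$ for all $t$ and all such inputs, i.e. $\theta\in\Theta_0^{(i)}$. Conversely, suppose $\theta\in\Theta_0^{(i)}$ and apply the input whose $i$-th component is $e^{-t}$ and whose other components vanish (any signal whose Laplace transform is nowhere zero serves equally well). Its transform $\hat{u}_i(s)=1/(s+1)$ is nonzero throughout its region of convergence, while $\widehat{\Delta y}(s)=f_{ss}(s,\theta)\,\hat{u}_i(s)$ must vanish there because $\Delta y\equiv 0$; therefore $f_{ss}(s,\theta)=0$ on a right half-plane. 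As $f_{ss}(\,\cdot\,,\theta)$ is a rational function of $s$ for the fixed matrix $\bar{A}(\theta)$, vanishing on a set with an accumulation point forces it to be the zero rational function, which is the meaning of ``$f_{ss}(s,\theta)=0\ \forall s\in\complex$'' (read off the eigenvalues of $\bar{A}(\theta)$). Combining the two directions gives the claimed characterisation.

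The only point that genuinely needs care, rather than routine Laplace-transform bookkeeping, is this converse step: one must fix a class of admissible inputs and exhibit a concrete member whose transform is nowhere zero, so that the scalar factor $\hat{u}_i(s)$ can be cancelled, and then invoke rationality (analyticity off the poles of $(sI-\bar{A}(\theta))^{-1}$) to pass from ``zero on the region of convergence'' to ``zero on all of $\complex$''. Here the assumption that $A(\theta)$, hence $\bar{A}(\theta)$, is Hurwitz is what guarantees the region of convergence contains a right half-plane, so no convergence issues arise; an alternative route using sinusoidal inputs and the frequency response exploits the same Hurwitz property to pass to steady state.
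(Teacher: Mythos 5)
Your argument is correct, but it runs through the frequency domain whereas the paper stays in the time domain. The paper's proof notes that $\Delta y(0)=0$ and reduces membership in $\Theta_0^{(i)}$ to the invariance condition $\Delta y=0 \Rightarrow \Delta\dot{y}=0$; expanding this along \eqref{eq:xe} (and, implicitly, iterating it through higher derivatives) forces the feedthrough term and all Markov parameters of the channel $u_i\mapsto\Delta y$ to vanish, which is the coefficient-wise statement of $f_{ss}(\cdot,\theta)\equiv 0$. You instead identify $f_{ss}(\cdot,\theta)$ as the transfer function from channel $i$ to $\Delta y$ and invoke the fact that an LTI input--output map is zero iff its transfer function is; your converse step --- injecting $e^{-t}$, cancelling the nowhere-vanishing transform $1/(s+1)$, and extending from a right half-plane to all of $\complex$ by rationality --- is precisely the piece the paper leaves to ``expanding both sides of the implication,'' so your version is, if anything, more complete. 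What the paper's route buys is that it is purely algebraic (no Laplace transformability, no use of the Hurwitz assumption); what yours buys is an explicit, checkable equivalence rather than a one-line appeal.

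Two small points. First, you silently read the statement's $\bar{C}(\theta^{*})$ as the $\Delta y$-row of $\bar{C}(\theta)$; that is the correct reading (otherwise $f_{ss}$ is not the map $u_i\mapsto\Delta y$), but you are repairing the statement, and you should say so. Second, your dismissal of the free response deserves one more sentence: Definition~\ref{def:OIS} fixes the initial condition of \eqref{eq:xe} at $(x_0,0)$ and quantifies only over inputs, so for $x_0\neq 0$ the zero-input contribution $[\Delta C(\theta)\;\,C(\theta)]\,e^{\bar{A}(\theta)t}\,(x_0,0)^{T}$ would also have to vanish, and this is not captured by $f_{ss}$ (take $B\equiv 0$: then $f_{ss}\equiv 0$ for every $\theta$, yet $\Delta y\not\equiv 0$ for $\theta\neq\theta^{*}$). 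The proposition is therefore really a statement about the forced response, i.e.\ $x_0=0$; you and the paper make the same implicit restriction, so this is a defect of the statement rather than of your proof, but calling the zero-input term ``irrelevant'' glosses over it.
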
 
\begin{proof}
A parameter is in the output invariant set if and only if the following holds:
\begin{align*}
\Delta{y} = 0 \Rightarrow \Delta \dot{y} = 0.
\end{align*}
This follows from the observation that $\Delta y  = 0$ at time zero.
Expanding both sides of the implication using \eqref{eq:xe} gives the result.
\end{proof}

\section{Parameterised Input/state-output properties}
\label{sec:properties}

We propose a set of parameterised input/state-output properties for linear systems with the goal of characterizing the deviation from the nominal  behavior.  We assume  that both nominal and uncertain system matrices are Hurwitz. No assumptions on the class of functions describing the parameter dependence, or the uncertainty set, are required. A convex optimisation formulation to solve some dissipation inequalities introduced in this section is presented in Section~\ref{sec:examples}, which restricts the dependence on the parameters to be of rational form.

\begin{prop}[Upper Bounds on State-to-Output Gain]
\label{prop:s2o}
For any initial condition $x_0 = x(0)$ and $e_0 = 0$, if there exists a function $V: \real^{n} \times \real^{n} \times \real^{n_\theta} \rightarrow \real_{\geq0}$, $V  \in \mathcal{C}^{1} $, $V(0,0,\cdot) = 0$ and functions $p_i:  \real^{n_\theta} \rightarrow \real_{\geq0}$, $i = 1,2$ and a set $\Theta\subset \real^{n_\theta}$ such that 
\begin{subequations}
\label{eq:s2oineq}
\begin{equation}
\label{eq:Vs2o}
 V(x,e,\theta)  \leq p_1(\theta) \|x\|^2 + p_2(\theta) \|e\|^2 \quad \forall \theta \in \Theta
\end{equation}
\begin{equation}
\label{eq:reachableset}
\dot{V}(x,e,\theta) \leq - (\Delta y)^{T}(\Delta y) \quad \forall \theta \in \Theta
\end{equation}
along the trajectories of~\eqref{eq:xe} (with $u\equiv0$) then
\begin{equation}
\label{eq:s2obound}
\dfrac{ \| \Delta y\|_2^2 }{\|x_0\|^2 } \leq p_1(\theta) \quad  \forall \theta \in \Theta.
\end{equation}
\end{subequations}
\end{prop}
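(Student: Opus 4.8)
The plan is to run the standard dissipation-inequality (storage-function) argument: integrate the infinitesimal inequality~\eqref{eq:reachableset} along a trajectory of~\eqref{eq:xe} with $u\equiv 0$ and $(x(0),e(0))=(x_0,0)$, and then exploit the boundary behaviour of $V$ at $t=0$ and as $t\to\infty$. Fix $\theta\in\Theta$. Since $V\in\mathcal{C}^1$ and the trajectory $t\mapsto(x(t),e(t))$ is absolutely continuous, the fundamental theorem of calculus applied to~\eqref{eq:reachableset} gives, for every $T\ge 0$,
\[
V(x(T),e(T),\theta) - V(x_0,0,\theta) \;\le\; -\int_0^T \|\Delta y(t)\|^2\,dt .
\]
Because $V\ge 0$ this already yields $\int_0^T\|\Delta y\|^2\,dt \le V(x_0,0,\theta)$ for all $T$, so $\Delta y\in\mathcal{L}_2$ and $\|\Delta y\|_2^2=\lim_{T\to\infty}\int_0^T\|\Delta y\|^2\,dt \le V(x_0,0,\theta)$.

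Next I would pass to the limit $T\to\infty$ on the left-hand side. This is where the Hurwitz hypothesis is used: with $u\equiv 0$ the nominal block $A(\theta^*)$ is Hurwitz so $x(t)\to 0$, and then by the triangular structure of $\bar A(\theta)$ in~\eqref{eq:xe} together with $A(\theta)$ Hurwitz we also get $e(t)\to 0$ — this is precisely the stability of the origin of~\eqref{eq:xe} recorded just after that equation. Continuity of $V$ and $V(0,0,\cdot)=0$ then force $V(x(T),e(T),\theta)\to 0$, so the inequality above in the limit reads $-V(x_0,0,\theta)\le -\|\Delta y\|_2^2$, i.e. $\|\Delta y\|_2^2\le V(x_0,0,\theta)$.

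Finally I would invoke~\eqref{eq:Vs2o} with $e=0$: $V(x_0,0,\theta)\le p_1(\theta)\|x_0\|^2 + p_2(\theta)\cdot 0 = p_1(\theta)\|x_0\|^2$. Chaining the two estimates gives $\|\Delta y\|_2^2\le p_1(\theta)\|x_0\|^2$, and dividing by $\|x_0\|^2$ — the case $x_0=0$ being trivial, since then the whole trajectory and $\Delta y$ vanish — produces the claimed bound~\eqref{eq:s2obound}, uniformly over $\theta\in\Theta$.

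The only genuinely delicate point, everything else being bookkeeping, is the justification that $V(x(T),e(T),\theta)\to 0$: this requires both the convergence $(x(t),e(t))\to 0$ supplied by the Hurwitz/triangular structure and the continuity of $V$ at the origin with $V(0,0,\cdot)=0$. One should also note explicitly that~\eqref{eq:reachableset} is assumed to hold \emph{along trajectories} of~\eqref{eq:xe}, so no complication arises from evaluating $\dot V$ off-trajectory, and that finiteness of $\|\Delta y\|_2$ is a by-product of the argument rather than a standing assumption.
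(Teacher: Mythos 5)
Your proposal is correct and follows essentially the same route as the paper: integrate~\eqref{eq:reachableset} over $[0,T]$, discard $V(x(T),e(T),\theta)\geq 0$, bound $V(x_0,0,\theta)$ via~\eqref{eq:Vs2o} with $e(0)=0$, and take the supremum over $T$. The additional $T\to\infty$ limit argument using the Hurwitz/triangular structure is harmless but superfluous, since (as you yourself note) the uniform-in-$T$ bound obtained from $V\geq 0$ already yields $\|\Delta y\|_2^2\leq V(x_0,0,\theta)$ without needing $V(x(T),e(T),\theta)\to 0$.
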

\begin{proof}
Integrate~\eqref{eq:reachableset} over the interval $[0,T]$ to obtain
$$\int^{T}_{0} (\Delta y(\tau))^{T}(\Delta y(\tau)) d\tau \leq  V(x(0),e(0),\theta) - V(x(T),e(T),\theta).$$
Since $V(x(T),e(T),\theta)\geq 0$ we have
$$\int^{T}_{0} (\Delta y(\tau))^{T}(\Delta y(\tau)) d\tau \leq  V(x(0),e(0),\theta)$$
for every $T$.  With~\eqref{eq:Vs2o} we obtain
$$\int^{T}_{0} (\Delta y(\tau))^{T}(\Delta y(\tau)) d\tau \leq  p_1(\theta) \|x\|^2 + p_2(\theta) \|e\|^2.$$
Since the initial conditions of the system satisfy $(x(0),e(0)) = (x_0,0)$, and the relation holds for every $T\geq0$ we obtain~\eqref{eq:s2obound}.
\end{proof}

The above proposition helps in characterising sets of parameters bounding the  $\mathcal{L}_2$-norm of the mismatch output in terms of  level sets of the function $p_1(\theta)$ since
\begin{equation*} 
\theta \in   \left\lbrace \theta \in \Theta | p_1(\theta) \leq \lambda \right\rbrace \Rightarrow \int^{T}_{0} (\Delta y)^{T}(\Delta y)  d\tau \leq \lambda \|x_0\|^2.
\end{equation*}
In Section~\ref{sec:examples} an example illustrates level sets of a function $p_1(\theta)$ satisfying the above conditions.

\begin{remark}
\label{rem:p1thetastar}
Notice that we can always impose $p_1(\theta^{*})  =  0$ when computing  $p_1$, $p_2$ and $V$ to solve the inequalities \eqref{eq:s2oineq}. This fact results from the system structure and the choice of output to be the mismatch to the nominal system which implies that, whenever $\theta  = \theta^{*}$, we have $\Delta y(t)\equiv 0$ (yielding $\|\Delta y\|^2_2 = 0$) which implies $\frac{\|\Delta y\|^2_2}{\|x_0\|^2} = 0$.
\end{remark}
%
%\begin{remark}
%\label{rem:p1thetastar}
%Notice that, for $\theta  = \theta^{*}$, system~\eqref{eq:xe} has a diagonal structure and $\Delta y = e$. Therefore, for $(x(0), e(0)) = (x_0, 0)$ one has $\Delta y(t)\equiv 0$ yielding $\|\Delta y\|^2_2 = 0$. From this fact, inequality \eqref{eq:s2obound} holds for $p_1(\theta^{*})  =  0$
%(which, from~\eqref{eq:Vs2o} implies $V(x(0),0,\theta^{*})  =  0 \quad \forall \ x_0 \in \real^{n} $). Hence we can impose $p_1(\theta^{*})  =  0$ when computing  $p_1$, $p_2$ and $V$ to solve inequalities \eqref{eq:s2oineq}. 
%\end{remark}

The proposition below provides conditions to lower bound the State-to-Output Gain of system~\eqref{eq:xe}.

\begin{prop}[Lower Bounds on State-to-Output Gain]
\label{prop:s2oLB}
For any initial condition $x_0 = x(0)$, if there exists a function $V_\ell: \real^{n} \times \real^{n} \times \real^{n_\theta} \rightarrow \real_{\geq0}$, $V_\ell \in \mathcal{C}^{1}$ and functions $p_{\ell i}:  \real^{n_\theta} \rightarrow \real_{\geq0}$, $i = 1,2$ and a set $\Theta\subset \real^{n_\theta}$ such that 
\begin{subequations}
\label{eq:s2oineqLB}
\begin{equation}
\label{eq:Vs2oLB}
p_{\ell 1}(\theta) \|x\|^2 + p_{\ell 2}(\theta) \|e\|^2   \leq   V_{\ell}(x,e,\theta)  \quad \forall \theta \in \Theta
\end{equation}
\begin{equation}
\label{eq:reachablesetLB}
-(\Delta y)^{T}(\Delta y)  \leq \dot{V}_\ell(x,e,\theta) \quad \forall \theta \in \Theta
\end{equation}
along the trajectories of~\eqref{eq:xe} then
\begin{equation}
\label{eq:s2oboundLB}
\dfrac{ \| \Delta y\|_2^2 }{\|x_0\|^2 } \geq p_{\ell 1}(\theta) \quad  \forall \theta \in \Theta.
\end{equation}
\end{subequations}
\end{prop}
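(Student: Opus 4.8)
The plan is to run exactly the argument of Proposition~\ref{prop:s2o} with the inequalities reversed, the extra ingredient being that the terminal value of $V_\ell$ must now be disposed of from below rather than from above. First I would integrate the dissipation inequality~\eqref{eq:reachablesetLB} along the (zero-input) trajectories of~\eqref{eq:xe}, as in Proposition~\ref{prop:s2o}, over $[0,T]$, obtaining
\begin{equation*}
V_\ell(x(0),e(0),\theta) - V_\ell(x(T),e(T),\theta) \;\leq\; \int_0^T (\Delta y(\tau))^T(\Delta y(\tau))\,d\tau .
\end{equation*}
Using $(x(0),e(0))=(x_0,0)$ together with the lower bound~\eqref{eq:Vs2oLB} evaluated at $(x_0,0,\theta)$ — where the $p_{\ell 2}$ term drops out since $\|e(0)\|=0$ — the left-hand side is at least $p_{\ell 1}(\theta)\|x_0\|^2 - V_\ell(x(T),e(T),\theta)$.

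Next I would let $T\to\infty$. Because $A(\theta^{*})$ and $A(\theta)$ are Hurwitz, the block-triangular matrix $\bar A(\theta)$ of~\eqref{eq:xeSHORT} is Hurwitz, so with $u\equiv 0$ the state $(x(t),e(t))$ decays exponentially to the origin; consequently $\Delta y\in\mathcal{L}_2$ and the right-hand side tends to $\|\Delta y\|_2^2$, while by continuity of $V_\ell$ (it is $\mathcal{C}^1$) we have $V_\ell(x(T),e(T),\theta)\to V_\ell(0,0,\theta)$. Discarding this limit requires $V_\ell(0,0,\cdot)=0$; I would read this in as a standing hypothesis, exactly as $V(0,0,\cdot)=0$ is assumed in Proposition~\ref{prop:s2o}. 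With it one gets $p_{\ell 1}(\theta)\|x_0\|^2\leq\|\Delta y\|_2^2$ for all $\theta\in\Theta$, and dividing by $\|x_0\|^2$ (with $x_0\neq 0$) yields~\eqref{eq:s2oboundLB}.

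The mechanical parts — integrating $\dot V_\ell$, and specialising the quadratic lower bound to $e=0$ — are immediate. The only delicate step is the passage to the limit: unlike in the upper-bound case, where $V\geq 0$ lets one simply drop $V(x(T),e(T),\theta)$, here the terminal term enters with the ``wrong'' sign and can be eliminated only by showing it converges to zero. So the hard part will be the limiting argument: establishing exponential decay of $(x,e)$ from the triangular Hurwitz structure, deducing $\Delta y\in\mathcal{L}_2$ so that $\int_0^\infty(\Delta y)^T(\Delta y)\,d\tau=\|\Delta y\|_2^2$, and combining continuity of $V_\ell$ with the normalisation $V_\ell(0,0,\theta)=0$ to discard the boundary contribution at infinity.
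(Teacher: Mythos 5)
Your argument is correct, and it is worth noting that the paper itself gives no proof of this proposition (it is stated between Propositions~\ref{prop:s2o} and~\ref{prop:Hinf} without one), so the natural benchmark is the mirrored version of the proof of Proposition~\ref{prop:s2o} — which is exactly what you carry out. More importantly, you have correctly identified the one place where the mirroring genuinely fails to close: after integrating \eqref{eq:reachablesetLB} the terminal value $V_\ell(x(T),e(T),\theta)$ appears with the wrong sign and cannot simply be dropped using nonnegativity, so one must send $T\to\infty$, use the Hurwitz block-triangular structure of $\bar A(\theta)$ to get $(x(T),e(T))\to 0$, and then dispose of $V_\ell(0,0,\theta)$. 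As stated, the proposition omits the normalisation $V_\ell(0,0,\cdot)=0$ that this step needs, and reading it in (as the analogous hypothesis appears in Proposition~\ref{prop:s2o}) is a legitimate repair. One small addition: the extra hypothesis can actually be avoided. By linearity of \eqref{eq:xe} with $u\equiv 0$, the trajectory from $\lambda x_0$ is $\lambda$ times the trajectory from $x_0$, so the inequality you obtain before discarding the terminal term reads $p_{\ell 1}(\theta)\lambda^2\|x_0\|^2 - V_\ell(0,0,\theta) \leq \lambda^2\|\Delta y\|_2^2$; dividing by $\lambda^2$ and letting $\lambda\to\infty$ kills the constant $V_\ell(0,0,\theta)$ and yields \eqref{eq:s2oboundLB} exactly as stated. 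Either way, your proof is sound and the limiting argument you flag as the delicate step is indeed the substantive content.
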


The following proposition proposes a parameterisation for the  $\mathcal{L}_2$-induced gain of system~\eqref{eq:xe}.
 
\begin{prop}[Upper Bounds on the  $\mathcal{L}_2$-Induced Gain]
 \label{prop:Hinf}
For  $x(0) = 0$, $e(0) = 0$, if there exists a function $V: \real^{n} \times \real^{n} \times \real^{n_\theta} \rightarrow \real_{\geq0}$, $V(0,0,\cdot) = 0$, $V \in \mathcal{C}^{1}$ and  functions $\gamma_d:  \real^{n_\theta} \rightarrow \real_{\geq0}$ and $\gamma_n:  \real^{n_\theta} \rightarrow \real_{\geq0}$ and a set $\Theta\subset \real^{n_\theta}$ such that
\begin{subequations}
\label{eq:Hinfcond}
\begin{equation} 
V(x,e,\theta)  \geq 0 \quad  \forall \theta \in \Theta
\label{eq:VHinf}
\end{equation}
\begin{equation}
\label{eq:dVHinf}
\dot{V}(x,e,\theta) \leq - \gamma_d(\theta) (\Delta y)^{T}(\Delta y)  + \gamma_n(\theta) u^{T}u \quad \forall \theta \in \Theta
\end{equation}
then
\begin{equation}
\label{eq:hinf}
\dfrac{ \|\Delta y\|_2^2 }{ \|u \|_2^2 } \leq \gamma(\theta) \quad  \forall \theta \in \Theta,
\end{equation}
\end{subequations}
with $ \gamma(\theta)  = \dfrac{ \gamma_n(\theta)}{ \gamma_d(\theta)}$.
\end{prop}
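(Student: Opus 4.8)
The plan is to mimic the dissipation-inequality argument used in the proof of Proposition~\ref{prop:s2o}: integrate \eqref{eq:dVHinf} along the trajectories of \eqref{eq:xe}, then exploit the zero initial condition together with the nonnegativity \eqref{eq:VHinf} of $V$.

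First I would fix $\theta \in \Theta$ and a finite horizon $T > 0$, take an arbitrary input $u \in \mathcal{L}_2$, and integrate \eqref{eq:dVHinf} over $[0,T]$ along the corresponding trajectory of \eqref{eq:xe}. This yields
\[
V(x(T),e(T),\theta) - V(x(0),e(0),\theta) \leq -\gamma_d(\theta)\int_0^T (\Delta y(\tau))^T(\Delta y(\tau))\,d\tau + \gamma_n(\theta)\int_0^T u(\tau)^T u(\tau)\,d\tau .
\]
Since $x(0) = e(0) = 0$ and $V(0,0,\cdot) = 0$, the second term on the left vanishes, and since $V(x(T),e(T),\theta) \geq 0$ by \eqref{eq:VHinf} I may drop the left-hand side entirely to obtain
\[
\gamma_d(\theta)\int_0^T (\Delta y(\tau))^T(\Delta y(\tau))\,d\tau \leq \gamma_n(\theta)\int_0^T u(\tau)^T u(\tau)\,d\tau \leq \gamma_n(\theta)\,\|u\|_2^2 .
\]

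Next I would let $T \to \infty$. The right-hand side is bounded by $\gamma_n(\theta)\|u\|_2^2$ uniformly in $T$, while the left-hand integral is nondecreasing in $T$ and bounded above, hence converges to $\gamma_d(\theta)\|\Delta y\|_2^2$; this in particular certifies $\Delta y \in \mathcal{L}_2$, which is consistent with the standing assumption that the system matrix of \eqref{eq:xe} is Hurwitz and $u \in \mathcal{L}_2$. Dividing through by $\gamma_d(\theta)$ and by $\|u\|_2^2$ then gives \eqref{eq:hinf} with $\gamma(\theta) = \gamma_n(\theta)/\gamma_d(\theta)$.

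The only genuine subtlety is the division step: the ratio $\gamma(\theta) = \gamma_n(\theta)/\gamma_d(\theta)$ is well defined only where $\gamma_d(\theta) > 0$ (and, trivially, $\|u\|_2 \neq 0$), so I would either add $\gamma_d(\theta) > 0$ as an explicit hypothesis or simply record the substantive conclusion in the homogeneous form $\gamma_d(\theta)\|\Delta y\|_2^2 \leq \gamma_n(\theta)\|u\|_2^2$ and present \eqref{eq:hinf} as its restatement wherever $\gamma_d(\theta) > 0$. Everything else is routine and uses no structural property of $\bar A,\bar B,\bar C,\bar D$ beyond well-posedness of the trajectories, so I do not expect any real obstacle here; the argument is essentially the standard $\mathcal{L}_2$-gain dissipativity estimate adapted to the $\theta$-parameterised storage function and supply rate.
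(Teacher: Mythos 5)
Your proof is correct and follows essentially the same route as the paper: integrate \eqref{eq:dVHinf} over $[0,T]$, use $V(0,0,\cdot)=0$ at the zero initial condition and the nonnegativity \eqref{eq:VHinf} to discard the terminal term, and let $T\to\infty$. Your remark that the division step implicitly requires $\gamma_d(\theta)>0$ is a fair observation that the paper's hypotheses (which only ask $\gamma_d \geq 0$) leave unstated, but it does not change the substance of the argument.
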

\begin{proof}
Integrate~\eqref{eq:dVHinf} over the interval $[0,T]$ to obtain
\begin{multline*}
\int^{T}_{0}  \left( (\Delta y(\tau))^{T}(\Delta y(\tau))  - \gamma(\theta) (u(\tau))^{T}u(\tau)  \right) d\tau  \\ \leq  V(x(0),e(0),\theta) - V(x(T),e(T),\theta).
\end{multline*}
By hypothesis $V(0,0,\cdot) = 0$, which gives 
\begin{multline*}
\int^{T}_{0}  \left( \gamma_d(\theta)  (\Delta y(\tau))^{T}(\Delta y(\tau))  - \gamma_n(\theta) (u(\tau))^{T}u(\tau)  \right) d\tau  \\ \leq  - V(x(T),e(T),\theta).
\end{multline*}
From~\eqref{eq:VHinf} we have
\begin{equation*}
\int^{T}_{0}  \left( \gamma_d(\theta) (\Delta y(\tau))^{T}(\Delta y(\tau))  - \gamma_n(\theta)(u(\tau))^{T}u(\tau)  \right) d\tau  \leq  0 .
\end{equation*}

which is equivalent to~\eqref{eq:hinf} since \eqref{eq:VHinf}-\eqref{eq:dVHinf} hold for all $\theta \in \Theta$.
\end{proof}

\begin{remark}
Whenever $\Theta$ is a compact set, an upper bound on the guaranteed $\mathcal{H}_\infty$ cost of the uncertain system is  given by $\gamma^{*}+\max_{\Theta}\gamma({\theta})$, where $\gamma^{*}$ is the   $\mathcal{H}_\infty$ gain of the nominal system. As pointed out in Remark~\ref{rem:p1thetastar}, when solving~\eqref{eq:Hinfcond},  it is possible to impose $\gamma_n(\theta^*) = 0$ since for $\theta = \theta^*$ we have $\Delta y \equiv 0$ if $x(0) = e(0) = 0$.
\end{remark}

The next result applies to system~\eqref{eq:xeSHORT} with $\bar{D}(\theta) = 0$. Recall that the $\cH_2$-norm of system \eqref{eq:x} is
\begin{equation*}
\|\bar G(\theta^*)\|_{\cH_2}^2 = \text{Tr}[B^T(\theta^*)PB(\theta^*)]= [C(\theta)QC^T(\theta^*)]
\end{equation*}
where $P> 0$ solves $A^T(\theta^*)P+PA(\theta)+C^T(\theta^*)C(\theta)=0$ and $Q>0$ solves the dual equation. An upper-bound for $\|\bar G(\theta^*)\|_{\cH_2}^2$ can be found by replacing the equality in the Lyapunov equation with an inequality and searching for any positive definite matrix $\widehat{P}$ that solves the matrix inequality.

 \begin{prop}[Bounds on the $\mathcal{H}_2$-norm]
 \label{prop:H2}
If there exists a function  $V: \real^{n} \times \real^{n} \times \real^{n_\theta} \rightarrow \real_{\geq0}$, $V(0,0,\cdot) = 0$, $V  \in \mathcal{C}^{1}$,  and functions $q_1, q_2 : \Theta \rightarrow \real_{\geq0}$ satisfying,
\begin{subequations}
\label{eq:H2ineq}
\begin{equation} 
V(x,e,\theta) \leq q_1(\theta)\|x\|^2 + q_2(\theta)\|e\|^2 \quad  \forall \theta \in \Theta
\label{eq:VH2}
\end{equation}
\begin{equation}
\label{eq:lyapInequality1}
\dot{V}(x,e,\theta) +(\Delta y)^T\Delta y  \leq  0   \quad  \forall \theta \in \Theta
\end{equation}
then
 \begin{equation}
 \label{eq:boundH2final}
  \|\bar{G}(\theta)\|_{\cH_2}^2 \leq   q_1(\theta)  Tr \big( \bar{B}^T(\theta) \bar{B}(\theta) \big)  \quad \forall \theta \in \Theta.
 \end{equation}
  \end{subequations}
\end{prop}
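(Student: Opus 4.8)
The plan is to combine the impulse-response characterisation of the $\mathcal{H}_2$-norm with the integration-of-the-dissipation-inequality argument already used to prove Proposition~\ref{prop:s2o}. Since $\bar D(\theta)=0$, the transfer function from $u$ to the mismatch output $\Delta y$ is strictly proper and, $\bar A(\theta)$ being Hurwitz, its $\mathcal{H}_2$-norm is finite and equals $\sum_{i=1}^{m}\|\bar G(\theta)e_i\delta\|_2^2$, where $\bar G(\theta)e_i\delta$ denotes the $\Delta y$-response to a unit impulse applied in the $i$-th input channel. For an LTI system with zero feedthrough this response, for $t>0$, is the \emph{free} trajectory of~\eqref{eq:xe} (with $u\equiv 0$) started from the jumped state $\bar x(0^{+})=\bar B(\theta)e_i$, i.e.\ from $x(0^{+})=B(\theta^{*})e_i$ and $e(0^{+})=\Delta B(\theta)e_i$ (cf.\ the input matrix in~\eqref{eq:xe}). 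So it suffices to bound the energy $\int_0^{\infty}(\Delta y(\tau))^{T}\Delta y(\tau)\,d\tau$ of each of these $m$ free trajectories.

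Next I would repeat, for each $i$, the estimate from the proof of Proposition~\ref{prop:s2o}: integrating~\eqref{eq:lyapInequality1} over $[0,T]$ along the free trajectory from $\bar x(0^{+})=\bar B(\theta)e_i$ and using $V\ge 0$ and $V(0,0,\cdot)=0$ gives
$$\int_0^{T}(\Delta y(\tau))^{T}\Delta y(\tau)\,d\tau \;\le\; V\big(x(0^{+}),e(0^{+}),\theta\big),$$
and letting $T\to\infty$ yields the same bound for the infinite-horizon integral. Invoking~\eqref{eq:VH2} at the point $\big(B(\theta^{*})e_i,\ \Delta B(\theta)e_i\big)$,
$$\|\bar G(\theta)e_i\delta\|_2^2 \;\le\; q_1(\theta)\,\|B(\theta^{*})e_i\|^2 + q_2(\theta)\,\|\Delta B(\theta)e_i\|^2 .$$
Summing over $i=1,\dots,m$ and using $\sum_{i=1}^{m}\|Me_i\|^2=Tr(M^{T}M)$ for $M=B(\theta^{*})$ and $M=\Delta B(\theta)$, together with $Tr\big(\bar B^{T}(\theta)\bar B(\theta)\big)=Tr\big(B^{T}(\theta^{*})B(\theta^{*})\big)+Tr\big(\Delta B^{T}(\theta)\Delta B(\theta)\big)$, produces
$$\|\bar G(\theta)\|_{\mathcal{H}_2}^2 \;\le\; q_1(\theta)\,Tr\big(B^{T}(\theta^{*})B(\theta^{*})\big)+q_2(\theta)\,Tr\big(\Delta B^{T}(\theta)\Delta B(\theta)\big).$$
Bound~\eqref{eq:boundH2final} then follows with $\max\{q_1,q_2\}$ in place of $q_1$; and one may always arrange $q_1\equiv q_2$ — replacing both by their pointwise maximum only relaxes~\eqref{eq:VH2} and leaves~\eqref{eq:lyapInequality1} untouched (cf.\ Remark~\ref{rem:p1thetastar}) — so this is the stated form.

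The step I expect to be the main obstacle is the first one: making precise the reduction of the $\mathcal{H}_2$-norm to a finite sum of free-response energies, and in particular the clean bookkeeping of the Dirac input and the resulting jump $\bar x(0^{+})=\bar B(\theta)e_i$ written in the $(x,e)$ coordinates of~\eqref{eq:xe} — this is exactly where the $q_2$-term (and hence the need for $\bar B(\theta)$ rather than only $B(\theta^{*})$) enters, in contrast to Proposition~\ref{prop:s2o} where the free trajectory starts from $(x_0,0)$. Once this reduction is in place, the remainder is the dissipation-integration estimate already carried out for Proposition~\ref{prop:s2o} together with the elementary trace identities above.
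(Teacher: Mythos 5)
Your proof follows essentially the same route as the paper's: both reduce the $\mathcal{H}_2$-norm to the sum, over input channels, of the output energies of the free responses of~\eqref{eq:xe} started from the columns of $\bar B(\theta)$, and both bound each such energy by integrating \eqref{eq:lyapInequality1} and then invoking \eqref{eq:VH2} at the initial state. The one substantive difference is that you keep track of the impulse-induced initial state correctly in the $(x,e)$ coordinates: the $i$-th free trajectory starts from $x(0^{+})=B_i(\theta^{*})$ \emph{and} $e(0^{+})=\Delta B_i(\theta)$, so \eqref{eq:VH2} yields $q_1\|B_i(\theta^{*})\|^2+q_2\|\Delta B_i(\theta)\|^2$, whereas the paper asserts ``$e(0)=0$'' at this step and silently drops the $q_2$ term. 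Your version is the right one, and your closing qualification is not cosmetic: the bound with $q_1$ alone can fail. Take $n=m=1$, $A(\theta^{*})=A(\theta)=-1$, $C(\theta^{*})=C(\theta)=1$, $B(\theta^{*})=0$, $B(\theta)=1$; then $V=\tfrac{1}{2}e^2$ with $q_1=0$, $q_2=\tfrac{1}{2}$ satisfies \eqref{eq:VH2} and \eqref{eq:lyapInequality1}, yet the transfer function from $u$ to $\Delta y$ is $-1/(s+1)$, whose squared $\mathcal{H}_2$-norm is $\tfrac{1}{2}>0=q_1\,Tr\big(\bar B^{T}\bar B\big)$. So the correct conclusion is the one you actually derive,
\begin{equation*}
\|\bar{G}(\theta)\|_{\mathcal{H}_2}^2 \;\leq\; q_1(\theta)\,Tr\big(B^{T}(\theta^{*})B(\theta^{*})\big)+q_2(\theta)\,Tr\big(\Delta B^{T}(\theta)\Delta B(\theta)\big),
\end{equation*}
or equivalently the printed form with $q_1$ replaced by $\max\{q_1,q_2\}$ (which, as you note, still satisfies the hypotheses); the statement holds verbatim only when $\Delta B(\theta)=0$ or $q_2\le q_1$. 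Your handling of the Dirac input via the jump $\bar x(0^{+})=\bar B(\theta)e_i$ is the standard, legitimate reading of the impulse-response formula the paper itself uses, so no gap remains on that front.
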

\begin{proof}
Integrating \eqref{eq:lyapInequality1} over the interval $[0,T]$ and substituting $\Delta y(t) = \widehat{C}(\theta)\bar{x}(t)$, $\widehat{C}(\theta):= \left[\Delta C(\theta)  \, C(\theta) \right]$ gives
\begin{eqnarray*}
V(x(T),e(T),\theta)-V(x(0),e(0),\theta) &\le& - \int_0^T (\Delta y(\tau))^T\Delta y(\tau) d\tau\\
&=&-\|\Delta y\|_{2,[0,T]}^2.
\end{eqnarray*}
Letting $t\rightarrow \infty$ we arrive at 
\begin{equation}\label{eq:ybound} 
\|y\|_2^2 \le V(x(0),e(0),\theta).
\end{equation}
Let $\bar G(s)$ be a frequency domain representation of $\bar G$ (note that for clarity we do drop the dependence on $\theta$), then the $\cH_2$-norm is equivalently given by
\begin{eqnarray*}
\|\bar G(s)\|_{\cH_2}^2 &=& \frac{1}{2 \pi}\int_0^{\infty} \Tr \left[\bar G^*(j\omega)\bar G(j\omega)\right] d\omega  \\
&=& \Tr \left[ \int_0^{\infty} (\widehat Ce^{\bar A\tau}\bar  B)^T(\widehat Ce^{ \bar A\tau} \bar B) d\tau\right].
\end{eqnarray*}
Define $Y(\tau) := \widehat Ce^{\bar A\tau}\bar B$ and let $y_i$ denote the $i^\text{th}$ column of $Y$, thus $y_i = \widehat Ce^{\bar A\tau} \bar B_i$. Let $x_i(\tau)=e^{\bar A\tau}\bar B_i$, then it follows that $y_i(\tau)=\widehat Cx_i(\tau)$ and therefore
\begin{equation*}
\|\bar G\|_{\cH_2}^2= \sum_{i=1}^m \|y_i(\tau)\|^2. 
\end{equation*}
The bound \eqref{eq:ybound} gives
\begin{eqnarray}
\|y_i\|^2 \le V(x_i(0),e_0,\theta) &=& V(e^{\bar A\tau}\bar B_i,e(0),\theta) \nonumber \\
&=&V(\bar B_i,e(0),\theta)\nonumber \\
&\le&q_1(\theta)\bar B_i^T\bar B_i \label{eq:q_ineq}
\end{eqnarray}
where the final inequality comes from $e(0)=0$ and \eqref{eq:VH2}. Finally, using \eqref{eq:q_ineq} it follows that 
\begin{equation*}
\sum_{i=1}^m \|y_i\|\le q_1(\theta) \sum_{i=1}^m \bar B_i^T\bar B_i = q_1(\theta)\Tr [\bar B^T\bar B].
\end{equation*}
which upon inserting the dependence on $\theta$ completes the proof.
\end{proof}
\begin{cor}[Bounds on the $\mathcal{H}_2$-norm]
 \label{prop:H2}
If there exists a parameter dependent matrix function $P : \Theta \rightarrow \real^{2n \times 2n}$ and a function $q_1 : \Theta \rightarrow \real_{\geq0}$ satisfying, for all $\theta \in \Theta$,
\begin{subequations}
\label{eq:H2ineq}
\begin{equation}
0 \leq P(\theta)  \leq q_1(\theta) I
\end{equation}
\begin{equation}
\label{eq:lyapInequality}
\bar{A}^T(\theta)P(\theta) + P(\theta)\bar{A}(\theta) +\bar{C}(\theta)^T\bar{C}(\theta)  \leq  0 
\end{equation}
%\begin{equation}
%\label{eq:boundH2}
% Tr \big[ \bar{B}^T(\theta)P(\theta)\bar{B}(\theta) \big] \leq p(\theta) 
%\end{equation}
then
 \begin{equation}
 \label{eq:boundH2final}
  \|\bar{G}(\theta)\|_{\mathcal{H}_2}^2 \leq  q_1(\theta)  Tr \big( \bar{B}^T(\theta) \bar{B}(\theta) 
 \quad \forall \theta \in \Theta.
 \end{equation}
  \end{subequations}
 \end{cor}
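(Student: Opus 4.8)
The plan is to obtain the corollary directly from the preceding proposition on bounds on the $\mathcal{H}_2$-norm, by specialising the storage function there to a state-quadratic form with a parameter-dependent weight. Concretely, given $P$ and $q_1$ as in the hypothesis, I would set
\[
V(x,e,\theta) := \bar{x}^{T} P(\theta)\,\bar{x}, \qquad \bar{x} = \begin{bmatrix} x \\ e \end{bmatrix},
\]
and take $q_2 := q_1$ in the notation of that proposition; the bound \eqref{eq:boundH2final} then follows at once once the three hypotheses of the proposition have been checked. In other words, the corollary is simply the proposition restricted to quadratic (in the state) storage functions.

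First I would verify that $V$ is admissible: as a quadratic form in $(x,e)$ it is $\mathcal{C}^{1}$ (indeed $\mathcal{C}^{\infty}$) in the state, it satisfies $V(0,0,\cdot)=0$, and $V(x,e,\theta)\geq 0$ for every $(x,e)$ because $P(\theta)\geq 0$; note that no regularity of $P(\cdot)$ over $\Theta$ is needed, since the whole argument is carried out pointwise in $\theta$. Next, the sandwich bound \eqref{eq:VH2}: from $P(\theta)\leq q_1(\theta) I$ we get $\bar{x}^{T}P(\theta)\bar{x} \leq q_1(\theta)\,\bar{x}^{T}\bar{x} = q_1(\theta)\|x\|^{2}+q_1(\theta)\|e\|^{2}$, which is exactly \eqref{eq:VH2} with the stated $q_1$ and with $q_2=q_1$.

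The last hypothesis to check is the dissipation inequality \eqref{eq:lyapInequality1}. Differentiating $V$ along the autonomous trajectories $\dot{\bar{x}}=\bar{A}(\theta)\bar{x}$ of \eqref{eq:xe} gives $\dot{V}= \bar{x}^{T}\big(\bar{A}^{T}(\theta)P(\theta)+P(\theta)\bar{A}(\theta)\big)\bar{x}$, while $(\Delta y)^{T}\Delta y = \bar{x}^{T}\widehat{C}^{T}(\theta)\widehat{C}(\theta)\,\bar{x}$ with $\widehat{C}(\theta)=\left[\Delta C(\theta)\ \ C(\theta)\right]$ the output matrix of the mismatch channel (this is the $\bar{C}(\theta)$ appearing in \eqref{eq:lyapInequality}, the relevant $\mathcal{H}_2$-norm being that of the map $u\mapsto\Delta y$, for which $\bar{D}=0$). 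Hence $\dot{V}+(\Delta y)^{T}\Delta y = \bar{x}^{T}\big(\bar{A}^{T}(\theta)P(\theta)+P(\theta)\bar{A}(\theta)+\widehat{C}^{T}(\theta)\widehat{C}(\theta)\big)\bar{x}\leq 0$ by \eqref{eq:lyapInequality}, so \eqref{eq:lyapInequality1} holds. Invoking the preceding proposition with this $V$, this $q_1$, and $q_2=q_1$ then yields \eqref{eq:boundH2final}, which is the claim.

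There is no genuine obstacle here: the only content is recognising that the linear matrix inequality \eqref{eq:lyapInequality} is precisely the quadratic-form (completed-square) version of the integral dissipation inequality \eqref{eq:lyapInequality1}, and that $0\leq P(\theta)\leq q_1(\theta)I$ encodes both nonnegativity of $V$ and the state-quadratic bound \eqref{eq:VH2}. The only points demanding a line of care are the identification of the correct output matrix (the $\Delta y$ channel $\widehat{C}$ rather than the stacked output $\bar{C}$ of \eqref{eq:xeSHORT}) and the harmless observation that the $\mathcal{C}^{1}$ requirement bears on the state variables, so smoothness of $P$ in $\theta$ is not imposed.
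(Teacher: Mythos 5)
Your proof is correct and is exactly the argument the paper intends (the corollary is stated without proof as a specialisation of the preceding Proposition to the quadratic storage function $V=\bar{x}^{T}P(\theta)\bar{x}$ with $q_2=q_1$). Your side remark about $\bar{C}$ versus $\widehat{C}$ is well taken: even reading $\bar{C}$ literally as the stacked output matrix of \eqref{eq:xeSHORT}, one has $\bar{C}^{T}\bar{C}\geq \widehat{C}^{T}\widehat{C}$, so \eqref{eq:lyapInequality} still implies the dissipation inequality \eqref{eq:lyapInequality1} and the conclusion stands.
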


\section{Examples}
\label{sec:examples}
\subsection{Analytic Example}
Consider system
\begin{equation}
\label{eq:sys_example}
\left\lbrace\begin{array}{l}
\dot{x} = - \dfrac{\theta_1^*}{1+ \theta_2^*}x \\
y = x
\end{array}
\right.
\end{equation}
with $\theta_i^* \in \real_{> 0}$, $i \in \{1,2\}$. The above system corresponds to \eqref{eq:x} with $A(\theta^*) = - \frac{\theta_1^*}{1+ \theta_2^*}$, $B(\theta^*) = 0$  and $C(\theta^*) = 1$. Consider the uncertainty set $$\Theta = \left\lbrace\theta \in \real^{2} | \theta_i \in (0,+\infty) \right\rbrace$$ since $\theta_1>0$ and $1+\theta_2>0$, $\forall \theta \in \Theta$, $A(\theta)$ is Hurwitz for all $\theta \in \Theta$. Now define 
\begin{equation*}
\tilde{\theta}_i : = \left( \frac{\theta_i}{\theta_i^{*}} -1 \right) \quad i \in \{1,2\}
\end{equation*}
establishing a bijection between $\Theta$ and the set $$\tilde{\Theta} = \left\lbrace \tilde{\theta} \in \real^{2} | \tilde{\theta}_i \in (-1,+\infty) \right\rbrace.$$

We can then construct system \eqref{eq:xe} for \eqref{eq:sys_example} in terms of $\tilde{\theta}$ as
\begin{equation*}
\left\lbrace \begin{array}{l}
\left[\begin{array}{c} \dot{x} \\ \dot{e} \end{array}\right] = \left[\begin{array}{cc} A(\theta^{*}) & 0 \\  A(\theta^{*})  \dfrac{( \theta_2^{*}(\tilde{\theta}_2 -\tilde{\theta}_1) - \tilde{\theta}_1  )}{1+  \theta_2^{*}(1 +\tilde{\theta}_2)}& - \dfrac{\theta_1^{*} (1 + \tilde{\theta}_1)}{ 1+  \theta_2^{*}(1 +\tilde{\theta}_2)} \end{array}\right] \left[\begin{array}{c} x \\ e \end{array}\right] \\
\left[\begin{array}{c} y \\ \Delta y \end{array}\right] = \left[\begin{array}{c} x \\ e \end{array}\right].
\end{array}\right. 
\end{equation*}

In order to obtain bounds on state-to-output gain as a function of the parameters, following Proposition~\ref{prop:s2o}, let us consider the structure
\begin{equation}
V(x,e,\tilde{\theta}) = p_1(\tilde{\theta})x^2 + p_2(\tilde{\theta})e^2
\end{equation}
with $p_i: \tilde{\Theta} \rightarrow \real_{>0}$. With the above, and using the fact that $ e = \Delta y$, inequality~\eqref{eq:reachableset} is written as $\dot{V}(x,e,\theta) + e^2 \\ = \left[\begin{array}{cc} x & e \end{array}\right]   M(\tilde{\theta})\left[\begin{array}{c} x \\ e \end{array}\right]\leq0$ with 
\begin{multline*}
M( \tilde{\theta}) = \\  He\left( \left[\begin{array}{cc} p_1(\tilde{\theta})A(\theta^*) & p_2(\tilde{\theta})A(\theta^*) \dfrac{(\theta_2^{*}(\tilde{\theta}_2 -\tilde{\theta}_1) - \tilde{\theta}_1)}{1+  \theta_2^{*}(1 +\tilde{\theta}_2)} \\  0 & \dfrac{- p_2(\tilde{\theta}) \theta_1^{*} (1 + \tilde{\theta}_1) }{ 1+  \theta_2^{*}(1 +\tilde{\theta}_2)}+ \dfrac{1}{2}  \end{array}\right] \right).
\label{eq:example_reachableset}
\end{multline*}
Let us now compute $p_1\geq0$ and $p_2\geq0$ to satisfy $M( \tilde{\theta})\leq0$ $\forall \tilde{\theta} \in \tilde{\Theta}$ therefore verifying $\dot{V}(x,e,\theta) + e^2\leq 0$ for all $\tilde{\theta} \in \tilde{\Theta}$. First consider the element $(2,2)$ in $M(\theta^{*}, \tilde{\theta})$
\begin{equation*}
M_{(2,2)}( \tilde{\theta}) = -\dfrac{2p_2(\tilde{\theta}) \theta_1^{*} (1 + \tilde{\theta}_1) }{ 1+  \theta_2^{*}(1 +\tilde{\theta}_2)} +1.
\end{equation*}
The choice of $p_2$ as
\begin{equation}
p_2(\tilde{\theta})  =  \dfrac{ (1+  \theta_2^{*}(1 +\tilde{\theta}_2) )}{\theta_1^{*} (1 + \tilde{\theta}_1)} 
\end{equation}
satisfies $p_2(\tilde{\theta}) >0 \ \forall \tilde{\theta}\in \tilde{\Theta}$ and yields $M_{(2,2)}(\tilde{\theta}) = -1$. With the above choice for $p_2$ we have
\begin{equation}
det(M(\tilde{\theta})) = 
- p_1(\tilde{\theta})A(\theta^*) - \left( A(\theta^*) \dfrac{(\theta_2^{*}(\tilde{\theta}_2 -\tilde{\theta}_1) - \tilde{\theta}_1)}{\theta_1^* (1+ \tilde{\theta}_1)}  \right)^{2}
\end{equation}
replacing the value for $p_2(\tilde{\theta})$ and picking
\begin{equation}
\label{eq:p1}
p_1(\tilde{\theta}) = \dfrac{1}{2}\dfrac{\theta_1^*}{(1+\theta_2^*)} \left(\dfrac{( \theta_2^* (\tilde{\theta}_2-\tilde{\theta}_1)-\tilde{\theta}_1)}{\theta_1^* (1+ \tilde{\theta}_1)} \right)^2
\end{equation}
we obtain $det(M(\tilde{\theta}))  = 0$. Since $det(M(\tilde{\theta}))  = 0$ and $M_{(2,2)}<0$, the matrix $M(\tilde{\theta})$ in the quadratic form $\dot{V}(x,e,\theta) + e^2$ is negative semidefinite and we have $\dot{V}(x,e,\theta) + e^2\leq 0 \, \forall \tilde{\theta} \in \tilde{\Theta}$.
Notice $p_1(\tilde{\theta})$ is non-negative $\forall \tilde{\theta}\in \tilde{\Theta}$ and $p_1(\tilde{\theta})\equiv 0$ in the subspace
$$\tilde{\Theta}_n = \left\lbrace \tilde{\theta} \mid  \left[\begin{array}{cc}1& - \frac{\theta_2^*}{(1+\theta_2^*)} \end{array}\right]\left[\begin{array}{c}\tilde{\theta}_1\\ \tilde{\theta}_2 \end{array}\right] = 0 \right\rbrace.$$
Following Proposition~\ref{prop:s2o} we have that
$$\int^{T}_{0} (\Delta y)^{T}(\Delta y) d\tau \leq  V(x(0),0,\theta) = p_1(\tilde{\theta})x^2(0),$$
from the fact that $p_1(\tilde{\theta})\equiv 0$ $\forall \tilde{\theta} \in \tilde{\Theta}_n$ we conclude that  $\int^{T}_{0} (\Delta y)^{T}(\Delta y) d\tau =0$ $\forall \tilde{\theta} \in \tilde{\Theta}_n$.

We depict $p_1(\tilde{\theta})$, the upper bound to $\|\Delta y\|^2_2$, in Figure~\ref{fig:p1} and level sets of~$p_1(\tilde{\theta})$ in Figure~\ref{fig:p2}. 

\begin{figure}[!htb]
\begin{psfrags}
     \psfrag{theta2}[l][l]{\footnotesize $\tilde{\theta}_2$}
     \psfrag{theta1}[l][l]{\footnotesize $\tilde{\theta}_1$}
     \psfrag{p1}[c][l][1][-90]{\footnotesize $p_1(\tilde{\theta})$}     
\epsfxsize=6.5cm
\centerline{\epsffile{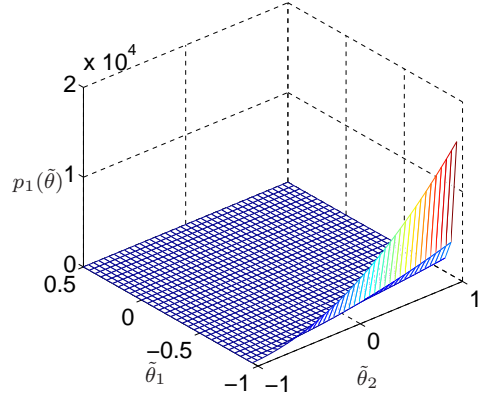}}
\end{psfrags}
\caption{Upper bound for $\frac{\|\Delta y(t)\|_2^2}{\|x(0)\|^2}$ given by $p_1(\tilde{\theta})$ as in~\eqref{eq:p1} for $\tilde{\theta} \in [-1,0.5] \times [-1,1]$. \label{fig:p1}}
\end{figure}

\begin{figure}[!htb]
\begin{psfrags}
     \psfrag{p1}[l][l][1][-90]{\footnotesize $p_1(\tilde{\theta})$}
     \psfrag{theta2}[l][l]{\footnotesize $\tilde{\theta}_2$}
     \psfrag{theta1}[c][l][1][-90]{\footnotesize $\tilde{\theta}_1$}
\epsfxsize=7cm
\centerline{\epsffile{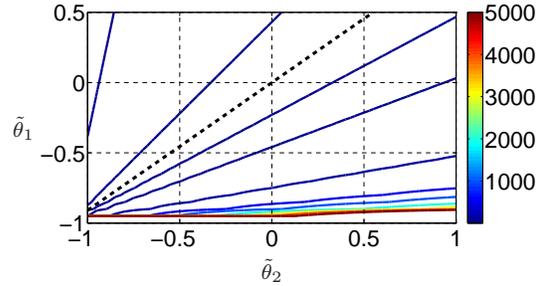}}
\end{psfrags}
\caption{Level sets of $p_1(\tilde{\theta})$. The set $\tilde{\Theta}_n$ where $p_1(\tilde{\theta})=0$ is depicted by the dashed black line. The level set corresponding to the first conical region neighboring the dashed line is $\{\theta \mid p_1(\tilde{\theta}) \leq 1\}$.
\label{fig:p2}}
\end{figure}

For this simple linear scalar system it is actually possible to compute the $\mathcal{L}_2$-norm of the output, and to obtain the state-to-output gain as:
\begin{equation}
\label{eq:p1n}
\begin{array}{rcl}
p_{1n}(\tilde{\theta}) &= & \dfrac{\|\Delta y(t)\|_2^2}{\|x(0)\|^2} \\
 &= &(\|x(0)\|^2)^{-1}\int_0^{\infty} ( e^{A(\theta^{*})\tau} x_0 - e^{A(\theta)\tau}x_0)^2 d\tau \\
&= & \int_0^{\infty}  e^{2A(\theta^{*})\tau}  - 2e^{\left(A(\theta^{*})+ A(\theta)\right)\tau} + e^{2A(\theta)\tau}  d\tau \\
&= & -\dfrac{1}{2A(\theta^{*})} + \dfrac{2}{\left(A(\theta^{*})+ A(\theta)\right)} -\dfrac{1}{2A(\theta)} \\
&= & \frac{(1+ \theta_2^*)}{2\theta_1^*} - \frac{2(1+ \theta_2^*)(1+ \theta_2^*(1 +\tilde{\theta}_2))}{\left( \theta_1^*(1+ \theta_2^*(1 +\tilde{\theta}_2)) + \theta_1^*(1 +\tilde{\theta}_1)(1+ \theta_2^*)  \right)} \\ && +\frac{1+ \theta_2^*(1 +\tilde{\theta}_2)}{2\theta_1^*(1 +\tilde{\theta}_1)}.
\end{array}
\end{equation}
The above expression therefore characterises the state-to-output gain of system~\eqref{eq:sys_example} as a rational function of the parameters $\tilde{\theta}$.  Figure~\ref{fig:Delta} depicts that $p_1$ in~\eqref{eq:p1} is an upper bound to $p_{1n}$.

\begin{figure}[!htb]
\begin{psfrags}
     \psfrag{theta2}[l][l]{\footnotesize $\tilde{\theta}_2$}
     \psfrag{theta1}[l][l]{\footnotesize $\tilde{\theta}_1$}
     \psfrag{p1}[r][r][1][-90]{\footnotesize $\begin{array}{c}p_1(\tilde{\theta}), \\ p_{1n}(\tilde{\theta}) \end{array}$}     
\epsfxsize=6.5cm
\centerline{\epsffile{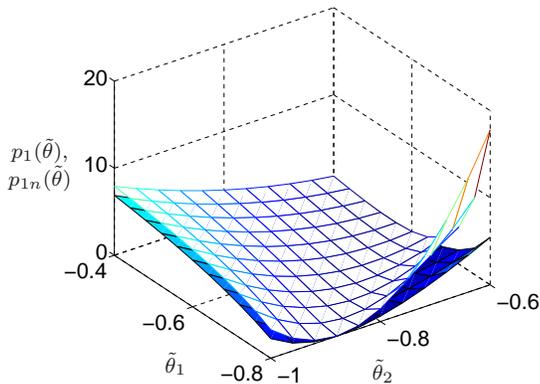}}
\end{psfrags}
\caption{Surface~$p_{1n}(\tilde{\theta})$ as in~\eqref{eq:p1n} and its upper bound~$p_{1}(\tilde{\theta})$ as in~\eqref{eq:p1}.
\label{fig:Delta}}
\end{figure}

\subsection{Numerical example}
In this example we make use of polynomial optimisation techniques to numerically construct parameter-dependent bounds on the $\mathcal{L}_2$-induced norm of an uncertain system.

Consider system
\begin{equation}
\label{eq:sys_example}
\left\lbrace\begin{array}{l}
\dot{x} = (-3 + \theta_1 +\theta_2)x + (-1 +\theta_2)u\\
y = (2 -\theta_1)x
\end{array}
\right.
\end{equation}
with $\theta^{*} = \left(0,0\right)$ and the uncertainty set defined as the rectangle $\Theta = \left[-1.5, 1.5\right] \times \left[-1, 1\right]$. In order to compute the $\mathcal{L}_2$-induced gain by checking the condition on Proposition~\ref{prop:Hinf} we consider the following constraints (see~\cite[p. 115]{BPT13} for further details)
\begin{subequations}
%\begin{array}{l}
\begin{equation}
V(x,e,\theta) -m_{11}(x,e,\theta)p_{\theta_1}-m_{12}(x,e,\theta)p_{\theta_2} \in  \Sigma[x,e,\theta] 
\end{equation} %\\
\begin{multline}
-\dot{V}(x,e,\theta) - \gamma_d(\theta) (\Delta y)^{T}(\Delta y)  + \gamma_n(\theta) u^{T}u  \\ -m_{21}(x,e,\theta)p_{\theta_1}-m_{22}(x,e,\theta)p_{\theta_2} \in  \Sigma[x,e,\theta] \label{eq:dotVSOS}\end{multline} %\\ 
\begin{equation}
m_{ij}(x,e,\theta)  \in  \Sigma[x,e,\theta] \quad i,j \in \{1,2\} 
\end{equation}
%\end{array}
\label{eq:SOS}
\end{subequations}
\noindent
with polynomials $p_{\theta_1} = -(\theta_1 +1.5)(\theta_1 -1.5)$, $p_{\theta_2} = -(\theta_1 +1)(\theta_1 -1)$, which yields $\Theta = \{\theta \mid p_{\theta_1}\geq 0 , p_{\theta_2}\geq 0 \}$ to formulate the following Sum-of-Squares program
\begin{equation}
\mbox{minimize} \, \gamma_n(0) \quad \mbox{subject to} \,  \eqref{eq:SOS}.
\label{eq:SOSP}
\end{equation}
We set polynomials $V$ and $m_{ij}$ to be quadratic on variables $x$ and $e$ and respectively of degree $3$ and $2$ on variable $\theta$. We formulate~\eqref{eq:SOSP} with SOSTOOLS~(\cite{PAVPSP13}) and solve the underlying SDP with SDPT3~(\cite{TTT99}). Figure~\ref{fig:Hinf} depicts solutions to the convex optimisation Problem~\eqref{eq:SOSP}. The guaranteed cost $\bar{\gamma} = \sqrt{4.6}$ over the set of uncertainties, is obtained by imposing $deg(\gamma_n) = deg(\gamma_d) = 0$ in the dissipation inequality~\eqref{eq:dotVSOS} and is depicted by the light gray hyperplane. The darker gray hyperplane, which sits underneath the guaranteed cost, corresponds to an affine $\gamma_n$  ($deg(\gamma_n)  = 1$) and a constant $\gamma_d$. The gain curve that corresponds to the case $deg(\gamma_n) = 2, deg(\gamma_d) = 1$ is a better approximation for the actual gain, depicted in the bottom graph, its level sets are depicted on the plane $(\theta_1,\theta_2)$. Notice that there is no guarantee that the computed surfaces have empty intersection since they were computed independently.

%Notice that, for the nominal values the induced $\mathcal{L}_2$ is zero (since $\Delta y(t)\equiv 0$). This is captured by the latter plot where the value of $\gamma(0)$ is close to zero, which is not possible to obtain when considering only guaranteed costs.

\begin{figure}[!htb]
\begin{psfrags}
     \psfrag{th2}[l][l]{\footnotesize $\theta_2$}
     \psfrag{th1}[l][l]{\footnotesize $\theta_1$}
     \psfrag{gam}[r][r][1][-90]{\footnotesize $\gamma(\theta)$}   
     \psfrag{GC}[l][l]{\footnotesize GC}   
     \psfrag{AC}[l][l]{\footnotesize {\color{white} AC}}        
     \psfrag{RC}[l][l]{\footnotesize RC}        
     \psfrag{TC}[l][l]{\footnotesize TC}        
\epsfxsize=9.0cm
\centerline{\epsffile{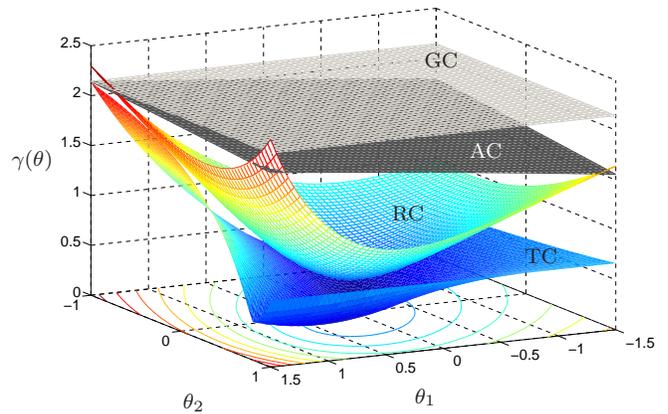}}
\end{psfrags}
\caption{ Graphs of the computed $\gamma(\theta)  = \frac{\gamma_n(\theta) }{\gamma_d(\theta)}$ for polynomials $\gamma_n(\theta)$ and $\gamma_d(\theta)$ of different degrees. The depicted surfaces correspond to  GC: Parameter-Independent Upper Bound, AC:  Affine  Parameter-Dependent Upper Bound, RC:  Rational  Parameter-Dependent Upper Bound, TC:  Actual Cost.
\label{fig:Hinf}}
\end{figure}

\section{Discussion and Conclusions}
\label{sec:conclusion}
We presented conditions to obtain upper-bounds to input/state-to-output gains for uncertain linear systems. These upper-bounds are functions of the uncertain parameters. For systems with polynomial and rational dependence on the parameters it is possible to compute these upper-bounds by solving a set of polynomial inequalities. These inequalities can then be solved with semidefinite programming.  We also provide a single-dimensional example where the exact characterization of the state-to-output gain is possible and we compare this gain with an analytic solution to the proposed conditions. 

%{\tt For the affine dependence on the parameters, ...} 

%The effects of time-varying parameters \cite{Che13b} are being currently studied. In this case the rate of variation of each parameter is considered as a parameter itself of the parameterised properties.  

An extension of the formulation presented is the state-feedback control design with a  \emph{given} parameterised upper bound of the input-output gains. Such a parameterised upper bound represents an allowable degradation of the nominal trajectories.  Another challenge is to assess the effects of uncertainty in the control gains while computing the control laws. Criteria for \emph{non-fragile control laws}~\cite{KB97} can be defined in terms of the upper-bounds on the system gains parameterised in terms of uncertainties in the control law gains. Quadratic separation and ellipsoidal sets for controller uncertainties were used in \cite{PA05} in this context.

The extension to nonlinear polynomial systems (systems defined by polynomial vector fields) are currently under study. For this class further assumptions are required since the existence of a single equilibrium point may not be guaranteed with the parameter variation. One important aspect is that the cascaded structure of the system when described in terms of the original state $x$ and the error $e$ as in~\eqref{eq:xe}, is preserved. However, the fact that the stability of the origin of the elements of the cascade implies the stability of the origin of the overall system is no longer true if the system under analysis is non-linear.

\appendix
\section{Sum-of-squares polynomials}
Let us take $\real[x]$ to be the ring of polynomials with real coefficients in the vector $x\in \real^n$ of indeterminates. Define the subring $\Sigma[x]$ be the set of sums of squares of polynomials in $\real[x]$:
\begin{align*}
&\Sigma[x] = \Big\{ p \in \real[x] \mid \exists m \in \mathbb{N}, \{g_i\}_{i=1}^m \in \real[x] : p = \sum_{i=1}^m g_i^2 \Big\}.
\end{align*}   

    % Each appendix must have a short title.
%Consider the parameterised static output-feedback expression for a linear system yielding the following dynamical matrix 
%\begin{equation}
%A(k) = A+BKC
%\end{equation}
%$k :=vec(K) \in \real^{mp}$.
%
%Consider the open-loop system matrix $A$ is Hurwitz. We can solve the parameterised Lyapunov equation, obtaining bounds for the $H_\infty$ gain as in Proposition~\ref{prop:Hinf}, taking as  output the closed-loop output $\tilde{y}$ instead of $\Delta y$. The design then consists in extracting the minimizer of the resulting function $\gamma(k)$, corresponding to the vector of gains. 
%
%This is an alternative to the non-convex problem of static output feedback. 
%
%\section{ }              % Sections and subsections are supported  
%                                                                         % in the appendices.
%                                                                         
                                                                         
\end{document}